\theoremstyle{plain} 
\newtheorem{theorem}[equation]{Theorem}
\newtheorem{proposition}[equation]{Proposition}
\newtheorem{corollary}[equation]{Corollary}
\newtheorem{question}[equation]{Question}
\theoremstyle{remark}
\newtheorem{remark}[equation]{Remark}
\theoremstyle{definition}
\numberwithin{equation}{subsection}
\begin{document}

\title{WOLF--KELLER  Theorem FOR NEUMANN EIGENVALUES}
\author{Guillaume Poliquin and Guillaume Roy-Fortin}
\thanks{The first author is supported by NSERC and ISM and the second author is supported by ISM}

\begin{abstract}
The classical Szeg\H{o}--Weinberger inequality states that among bounded planar domains of given area, the first nonzero Neumann eigenvalue is maximized by a disk. Recently,
it was shown in \cite{4} that, for simply connected planar domains of given area, the second nonzero Neumann eigenvalue is maximized in the limit by a sequence of domains degenerating to a disjoint union of two identical disks. We prove that Neumann eigenvalues of planar domains of fixed area are not always maximized by a disjoint union of arbitrary disks. This is an analogue of a result by Wolf and Keller proved earlier for Dirichlet eigenvalues.

\end{abstract}

\maketitle

\section{INTRODUCTION AND MAIN RESULTS}

\subsection{Dirichlet and Neumann eigenvalue problems}
 Let $\Omega \subset \mathbb{R}^N$ be a bounded domain (not necessarily connected) of volume $|\Omega|$.
 Throughout the paper, we assume that $|\Omega|=1$.
 Let  $\Delta:= \displaystyle\sum_{i=1}^N \dfrac{\partial^2}{\partial x_i^2}$ be the Laplace operator.
 The Dirichlet eigenvalue problem,
\begin{equation}\label{D}\tag{Dirichlet}
-\Delta u= \lambda u \mbox{ in } \Omega \mbox{ and } u=0 \mbox{ on } \partial\Omega, \nonumber
\end{equation}
has discrete spectrum (see \cite[p.7]{1})\index,
\begin{equation}
0<\lambda_1\leq\lambda_2\leq...\nearrow\infty. \nonumber \\
\end{equation}

According to a classical result of Faber and Krahn, the first
eigenvalue $\lambda_1$ is minimized by a ball. Furthermore, we have
the Krahn inequality, obtained from the Faber-Krahn result,
stating that $\lambda_2$ is minimized by the disjoint union of two
identical balls (see, for instance, \cite{1}, \cite{10} or \cite{8}).

If $\Omega$ has Lipschitz boundary, then it is well known that the Neumann eigenvalue problem,
\begin{equation}\label{N}\tag{Neumann}
-\Delta u= \mu u \mbox{ in } \Omega \mbox{ and } \frac{\partial u}{\partial n}=0 \mbox{ on } \partial\Omega, \nonumber
\end{equation}
has discrete spectrum (see \cite[p.113]{1}),
\begin{equation}
\mu_0=0\leq \mu_1 \leq \mu_2 \leq...\nearrow\infty. \nonumber
\end{equation}
Here $\frac{\partial}{\partial n}$ denotes the outward normal
derivative. The first eigenvalue of the Neumann problem $\mu_0=0$
corresponds to constant eigenfunctions.

We know, from a classical result of Szeg\H{o}-Weinberger (see
\cite{6} or \cite{7}), that the ball maximizes $\mu_1$ in all
dimensions.

\begin{remark}
For a disconnected domain $\Omega = \Omega_1 \cup \Omega_2$, the spectrum $\sigma(\Omega)$ is the
ordered union of $\sigma(\Omega_1)$ and $\sigma(\Omega_2)$. If $\Omega$ has $n$ connected components, then  the Neumann eigenvalues $\mu_0
= \mu_1 = ... = \mu_{n-1} = 0$.
\end{remark}

\begin{remark}
In some rare cases Neumann eigenvalues can be calculated explicitly (see
\cite{2}). For instance, the rectangle with sides $a$ and $b$ has
eigenvalues
\begin{equation}\label{eqsquares}
\mu_{j,k}(\Omega) = {\pi^2}\Big(\dfrac{j^2}{a^2} +
\dfrac{k^2}{b^2}\Big); \quad j,k \in \mathbb{N} \cup
\{0\},\end{equation} and the disk of unit area has
eigenvalues
\begin{equation}\label{eqdisks}
\mu_{m,n}(\Omega) = \pi j_{m,n}'^2; \quad m \in \mathbb{N} \cup \{0\}, n \in \mathbb{N}.\end{equation}
Here, $J_n$ is the $n$-th  Bessel function of the first kind and
$j_{m,n}'$ is the $m$-th zero of its derivative $J'_n$.

\end{remark}

\subsection{Main results}

The starting point of our research is a theorem by  Wolf and Keller
(see \cite[Theorem 8.1]{5}), stating that the Dirichlet eigenvalues $\lambda_n$
of planar domains are not always minimized by disjoint unions of disks (note
that for $n=1,2,3,4$ it is either known or conjectured that the
minimizers are disks or disjoint unions of disks).

More precisely, Wolf and Keller showed that $\lambda_{13}$ of any disjoint
union of disks is bigger than $\lambda_{13}$ of a single square
(see \cite[p. 408]{5}). Later, Oudet obtained numerical candidates
that were no longer disjoint unions of disks starting with $\lambda_5$ (see
\cite{12}).

In the present paper, we ask

\begin{question}\label{Q1}
Are disjoint unions of disks maximizing $\mu_n$ for all $n$?
\end{question}
Let us emphasize that, as in \cite{5}, we allow disjoint unions of {\it
arbitrary} disks.

In a recent paper of Girouard, Nadirashvili and Polterovich (see
\cite{4}) it is shown that for simply-connected planar domains of
fixed area,  the second positive Neumann eigenvalue $\mu_2$ is
maximized by a family of domains degenerating to the disjoint union
of two disks of equal area.  The same authors also made a remark
(see \cite[Remark 1.2.8]{3}) that a disjoint union of  $n$ identical disks can not
maximize $\mu_n$ for sufficiently large values of $n$.

In the present paper, we give a negative answer to Question \ref{Q1}:

\begin{theorem}\label{thm3}
$\mu_{22}$ is not maximized by any disjoint union of disks.
\end{theorem}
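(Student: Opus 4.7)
The plan is to mimic Wolf and Keller's argument for the Dirichlet problem: exhibit a single domain $\Omega^{\ast}$, not a disjoint union of disks, whose $\mu_{22}$ is strictly larger than $\mu_{22}(\Omega)$ for every disjoint union $\Omega$ of disks of area $1$. The obvious candidate is the unit square $Q$, since its Neumann spectrum is described by the explicit formula \eqref{eqsquares}.

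First I would compute $\mu_{22}(Q)$. Enumerating pairs $(j,k)\in(\mathbb{N}\cup\{0\})^2\setminus\{(0,0)\}$ in increasing order of $j^2+k^2$, with attention to the $(j,k)\leftrightarrow(k,j)$ symmetry and to arithmetic coincidences, a direct count gives $\mu_{22}(Q)=25\pi^2$; the degeneracy $25=5^2+0^2=4^2+3^2$ produces four lattice points at this level.

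Next I would bound $\mu_{22}(\Omega)$ uniformly over disjoint unions. If $\Omega=\bigsqcup_{i=1}^{k}B_i$ with $\sum_i|B_i|=1$, the Neumann spectrum of $\Omega$ is the sorted union of the individual spectra given by \eqref{eqdisks}. Introduce the counting function $N_\Omega(t):=\#\{m:\mu_m(\Omega)\le t\}$; then $\mu_{22}(\Omega)\le t$ is equivalent to $N_\Omega(t)\ge 23$. The core task is to verify $N_\Omega(25\pi^2)\ge 23$ for every admissible configuration $(k,A_1,\ldots,A_k)$, where $A_i=|B_i|$ and $\sum_i A_i=1$. The case $k\ge 23$ is immediate ($\mu_{22}(\Omega)=0$); for $k\le 22$ I would partition the disks according to which of the critical thresholds $(j_{m,n}')^2/(25\pi)$ each $A_i$ exceeds, and use $\sum_i A_i=1$ together with tabulated values of the first dozen Bessel-derivative zeros $j_{m,n}'$ to show that enough of the eigenvalues $\pi(j_{m,n}')^2/A_i$ lie below $25\pi^2$. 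A quantitative tightening then upgrades $\mu_{22}(\Omega)\le 25\pi^2$ to $\mu_{22}(\Omega)<25\pi^2=\mu_{22}(Q)$, which proves the theorem since $Q$ is plainly not a disjoint union of disks.

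The main obstacle is this counting step. Because the supremum over $(k,A_1,\ldots,A_k)$ is a genuine free optimization, it is not a priori clear that $k=22$ equal disks are extremal; in particular, asymmetric configurations consisting of one disk whose area is close to a critical threshold, together with many disks of area just below $(j_{1,1}')^2/(25\pi)$, come close to saturating the bound. Handling these borderline cases requires careful quantitative use of the first several zeros $j_{m,n}'$ and tight accounting against the constraint $\sum_i A_i = 1$.
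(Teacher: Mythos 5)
Your candidate domain and target value are exactly right: the unit square $Q$ has $\mu_{22}(Q)=25\pi^2\approx 246.74$ (the fourfold level $25=5^2+0^2=4^2+3^2$ is precisely what makes $n=22$ work), and the theorem follows once one shows that every disjoint union of disks of total area $1$ satisfies $\mu_{22}<25\pi^2$. But that inequality is the entire content of the proof, and your proposal does not establish it. You reduce it to showing $N_\Omega(25\pi^2)\ge 23$ for every configuration $(k,A_1,\dots,A_k)$ with $\sum_i A_i=1$, propose to partition the disks according to thresholds $(j'_{m,n})^2/(25\pi)$, and then explicitly concede that the borderline asymmetric configurations are not handled. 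This is a genuine gap rather than a routine verification: the supremum of $\mu_{22}$ over disjoint unions of disks equals $2\pi j_{4,1}^{'2}+6\pi j_{1,1}^{'2}\approx 241.56$, attained by two large disks together with six small ones (not by $22$ equal disks, which give only $22\pi j_{1,1}^{'2}\approx 234.3$, confirming your worry about non-symmetric extremizers), and this sits only about $2\%$ below $25\pi^2$, so a crude threshold count over the $A_i$ will not close the argument.

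The paper sidesteps this free optimization entirely by first proving Theorem \ref{thm2}: if the maximizer of $\mu_n$ in a scaling-invariant class is disconnected, then in the plane $\mu_n^*=\max_{1\le j\le n/2}\left(\mu_j^*+\mu_{n-j}^*\right)$, with the optimal areas determined by \eqref{part B}. Applied iteratively within the class of disjoint unions of disks, this collapses the optimization over all $(k,A_1,\dots,A_k)$ into a $22$-step recursion, $\mu_n^*(\mathbf{UD})=\max\bigl\{\mu_n(\mathbf{D}),\,\max_{j\le n/2}(\mu_j^*+\mu_{n-j}^*)\bigr\}$, whose output at $n=22$ is exactly $241.56<246.74$. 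To complete your argument you must either actually carry out the counting case analysis over all $k\le 22$ and all area vectors, including the near-extremal ones you identify, or (far more efficiently) prove the recursion of Theorem \ref{thm2} first and then read off the comparison from the resulting finite table.
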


To prove this theorem, we present an adaptation of Wolf-Keller's
result (see \cite[p. 74]{1}) to the Neumann case. We shall use the
same kind of notation as in Wolf-Keller's paper. Let $\mu_n^* =
\sup{\mu_n(\Omega)}$, which is finite (see \cite{11}). Assuming that
the preceding supremum is attained for a certain domain, let
$\Omega_n^*$ be a maximizer of $\mu_n$ among all domains of unit
volume. Also, we denote by $\alpha\Omega$ the image of $\Omega$ by a homothety with  $\alpha$.

\begin{theorem}\label{thm2}
  Let $n\geq2$. Suppose that $\Omega_n^*$ is the disjoint union of less than $n$ domains in
  $\mathbb{R}^N$, each of positive volume,  such that their total volume equals $1$.
   Then  \\
\begin{equation} \label{part A}
(\mu_n^*)^{N/2}=(\mu_i^*)^{N/2} + (\mu_{n-i}^*)^{N/2} =\max_{1\leq j \leq \frac{n}{2}} \left\lbrace  \\
(\mu_j^*)^{N/2} + (\mu_{n-j}^*)^{N/2}\right\rbrace,
\end{equation}
where $i$ is an integer maximizing $(\mu_j^*)^{N/2} + (\mu_{n-j}^*)^{N/2}$ for $j\leq \dfrac{n}{2}$. Moreover, we have
\begin{equation}\label{part B}
\Omega_n^*=\left(\left(\dfrac{\mu_i^*}{\mu_n^*}\right)^{\frac{1}{2}} \:\Omega_i^*\right)\:\bigcup\:\left(\left(\dfrac{\mu_{n-i}^*}{\mu_n^*}\right)^{\frac{1}{2}}\:\Omega_{n-i}^*\right),
\end{equation}
where the union above is disjoint.
\end{theorem}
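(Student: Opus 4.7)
The plan is to match a constructive lower bound on $(\mu_n^*)^{N/2}$ against an upper bound extracted from the disjoint-union hypothesis on $\Omega_n^*$.

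\emph{Lower bound.} For each integer $j$ with $1 \leq j \leq n-1$, I would build the unit-volume test domain
\[
\Omega^{(j)} := \left(\frac{\mu_j^*}{L_j}\right)^{1/2} \Omega_j^* \;\sqcup\; \left(\frac{\mu_{n-j}^*}{L_j}\right)^{1/2} \Omega_{n-j}^*, \qquad L_j := \bigl[(\mu_j^*)^{N/2} + (\mu_{n-j}^*)^{N/2}\bigr]^{2/N}.
\]
The homothety scaling $\mu_m(\alpha\Omega) = \alpha^{-2}\mu_m(\Omega)$ and $|\alpha\Omega| = \alpha^N|\Omega|$ give $|\Omega^{(j)}|=1$ and $\mu_j$ of the first piece $= \mu_{n-j}$ of the second $= L_j$. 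Since the Neumann spectrum of a disjoint union is the sorted merge of its pieces' spectra, at least $n+2$ eigenvalues of $\Omega^{(j)}$ lie $\leq L_j$ (with multiplicity at least two at $L_j$), and at most $n$ eigenvalues lie strictly below $L_j$, forcing $\mu_n(\Omega^{(j)}) = L_j$. Hence $(\mu_n^*)^{N/2} \geq (\mu_j^*)^{N/2} + (\mu_{n-j}^*)^{N/2}$ for every admissible $j$.

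\emph{Upper bound and structure.} Partition the $k < n$ components of $\Omega_n^*$ into two non-empty families $A,B$ of volumes $v_A, v_B$ summing to $1$, and write $A = v_A^{1/N}\tilde A$, $B = v_B^{1/N}\tilde B$ with $\tilde A, \tilde B$ of unit volume. The decisive preliminary step is a volume-perturbation argument showing that at the maximizer $\mu_n^*$ must belong to the spectra of \emph{both} $A$ and $B$: if only one side (say $B$) attained it while $A$'s largest sub-$\mu_n^*$ eigenvalue were strictly smaller, then an infinitesimal increase of $v_A$ (with compensating decrease of $v_B$) would lower the merged count of eigenvalues $\leq\mu_n^*$ by one without disturbing the strictly-below positions, so by merge counting the new domain would satisfy $\mu_n>\mu_n^*$, contradicting maximality. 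Thus $\mu_n^* = v_A^{-2/N}\mu_i(\tilde A) = v_B^{-2/N}\mu_{n-i}(\tilde B)$, where $i$ and $n-i$ are the indices at which $\mu_n^*$ occurs in the spectra of $\tilde A$ and $\tilde B$; the identity $i + (n-i) = n$ records the number of eigenvalues of $\Omega_n^*$ strictly below $\mu_n^*$ in the merged spectrum.

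\emph{Closing comparison.} Solving for the volumes gives $v_A = \bigl(\mu_i(\tilde A)/\mu_n^*\bigr)^{N/2}$ and $v_B = \bigl(\mu_{n-i}(\tilde B)/\mu_n^*\bigr)^{N/2}$, so the constraint $v_A+v_B=1$ reads
\[
(\mu_n^*)^{N/2} = \bigl(\mu_i(\tilde A)\bigr)^{N/2} + \bigl(\mu_{n-i}(\tilde B)\bigr)^{N/2} \leq (\mu_i^*)^{N/2} + (\mu_{n-i}^*)^{N/2} = L_i^{N/2},
\]
where the inequality uses only the definitions of $\mu_i^*$ and $\mu_{n-i}^*$. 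Combined with the lower bound, equality is forced throughout, so $\mu_i(\tilde A) = \mu_i^*$ and $\mu_{n-i}(\tilde B) = \mu_{n-i}^*$, i.e., $\tilde A = \Omega_i^*$ and $\tilde B = \Omega_{n-i}^*$, yielding \eqref{part B}; since $\mu_n^* = L_i \geq L_j$ for every $j$, the integer $i$ realizes the maximum in \eqref{part A}. The main technical obstacle I expect is multiplicity bookkeeping: the merge counts and the perturbation step are cleanest under a simple-spectrum hypothesis, and extra care is needed when $\mu_n^*$ has high multiplicity in a single component (demanding a careful choice of the partition so that $\mu_n^*$ sits in both families) or when $\Omega_i^*$ is itself disconnected, which is expected for $i \geq 2$.
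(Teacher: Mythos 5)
Your proof is correct and follows essentially the same route as the paper's: the same volume-perturbation argument forcing $\mu_n^*$ to lie in the spectra of both families, the same scaling computation turning $v_A+v_B=1$ into the identity $(\mu_n^*)^{N/2}=(\mu_i^*)^{N/2}+(\mu_{n-i}^*)^{N/2}$, and the same test domains $\Omega^{(j)}$ (the paper's $\widetilde\Omega_j$) to identify the maximum over $j$. The only differences are organizational (lower bound first, then upper bound, then closing the sandwich) and that your explicit merge-counting is, if anything, more careful than the paper about the multiplicity bookkeeping you flag.
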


Thus, if $\mu_i^*$ is known for $1 \leq i \leq n$,  using formula
\eqref{part A} we can find whether there exists a disconnected
domain $\Omega_n^*$ that would achieve $\mu_n^*$. If
$(\mu_i)^{N/2}+(\mu_{n-i})^{N/2}<(\mu_n^*)^{N/2}$ for all $i$, $1
\leq i \leq \dfrac{n}{2}$, then it is clear that $\Omega_n^*$ must
be connected. Furthermore, if we know $\mu_i^*$ for $i \leq i \leq
n-1$, but don't know $\mu_n^*$, we can sometimes show that
$\Omega_n^*$  is connected:

\begin{corollary}\label{cor2}
Let $\Omega$ be a domain such that $|\Omega| = 1$ and
\begin{equation} (\mu_n(\Omega))^{N/2} > (\mu_i^*)^{N/2} + (\mu_{n-i}^*)^{N/2} \end{equation}
for all $i$, $1 \leq i \leq \frac{n}{2}$, then $\Omega_n^*$ must be connected.
\end{corollary}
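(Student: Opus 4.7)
The plan is to argue by contradiction using Theorem \ref{thm2}. I will assume that $\Omega$ satisfies the strict inequality in the hypothesis while $\Omega_n^*$ is disconnected, and derive a contradiction. A first observation is that $\Omega_n^*$ cannot have more than $n$ components: otherwise $\mu_n(\Omega_n^*) = 0$, whereas the hypothesis forces $\mu_n^* \geq \mu_n(\Omega) > 0$. So a disconnected maximizer has between $2$ and $n$ components.

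In the principal case, where the number of components $k$ satisfies $2 \leq k \leq n-1$, Theorem \ref{thm2} applies directly and furnishes an index $i$ with $1 \leq i \leq n/2$ satisfying $(\mu_n^*)^{N/2} = (\mu_i^*)^{N/2} + (\mu_{n-i}^*)^{N/2}$. Combined with $\mu_n(\Omega) \leq \mu_n^*$, this immediately contradicts the assumed strict inequality at that value of $i$.

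The only configuration not literally covered by Theorem \ref{thm2} is when $\Omega_n^*$ has exactly $n$ components. I would handle it by grouping the components into $D_1$ (one component, volume $v_1$) and $D_2$ (the remaining $n-1$ components, volume $v_2 = 1 - v_1$), using the spectral identity $\mu_n(\Omega_n^*) = \min\{\mu_1(D_1), \mu_{n-1}(D_2)\}$, and then rescaling each piece to unit volume to obtain $v_1 \leq (\mu_1^*/\mu_n^*)^{N/2}$ and $v_2 \leq (\mu_{n-1}^*/\mu_n^*)^{N/2}$. Summing and using $v_1 + v_2 = 1$ yields $(\mu_n^*)^{N/2} \leq (\mu_1^*)^{N/2} + (\mu_{n-1}^*)^{N/2}$, which again contradicts the hypothesis taken at $i = 1$.

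The main conceptual point, and the only genuine obstacle, is to notice that the clause ``less than $n$ domains'' in Theorem \ref{thm2} leaves the $n$-component configuration uncovered, and to supply the short variational bound above, which is parallel to a special case of the argument behind Theorem \ref{thm2} itself.
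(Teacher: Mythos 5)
Your proof is correct and follows the same route as the paper's: the paper's entire argument is that if $\Omega_n^*$ were disconnected, Theorem \ref{thm2} would force $(\mu_n^*)^{N/2} = (\mu_i^*)^{N/2}+(\mu_{n-i}^*)^{N/2}$ for some $1 \le i \le n/2$, which together with $\mu_n^* \ge \mu_n(\Omega)$ contradicts the hypothesis. The one place you go beyond the paper is the configuration with exactly $n$ components, which the stated hypothesis of Theorem \ref{thm2} (``less than $n$ domains'') does not literally cover; the paper's one-line proof passes over this, while your grouping into one component plus the remaining $n-1$, the identity $\mu_n(\Omega_n^*)=\min\{\mu_1(D_1),\mu_{n-1}(D_2)\}$, and the rescaling bound $(\mu_n^*)^{N/2} \le (\mu_1^*)^{N/2}+(\mu_{n-1}^*)^{N/2}$ correctly dispose of it. (If one reads ``domains'' in Theorem \ref{thm2} as allowing disconnected pieces, any disconnected maximizer is a union of two such pieces and the case distinction largely evaporates for $n\ge 3$; either way your argument is sound and somewhat more careful than the original.)
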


In order to prove Theorem \ref{thm3}, we use Theorem \ref{thm2} iteratively.  In other words, to find $\mu_n^*$ in a specific class of domains (i.e., disjoint unions of either disks or squares), we use the results obtained already for $\mu_k^*,\, k=1,..,n-1$  in this class and choose the eigenvalue of either the connected domain or of the ``best'' disjoint union given by \eqref{part A}, whichever is bigger. In this way we obtain sharp upper bounds for arbitrary disjoint unions of either disks or squares, which we in turn compare between themselves in order to find a case where biggest eigenvalue yielded by the disks is lower than that of the squares (see section \ref{S2} for details).

\subsection{Discussion}
In dimensions $N\geq3$, the situation is more complicated than in the planar case. Indeed, as discussed in \cite[p. 562]{AB}, the eigenvalues of the ball have not yet been studied systematically for $N\geq4$. In three dimensions, explicit formulas for  eigenvalues of a ball can be obtained in terms of the roots $a'_{p,q}$ of the derivative of the spherical Bessel function $j_p(x)$ (see \cite{ABRA} for details regarding the spherical Bessel functions and refer to \cite{TABLE} for a table of their zeros $a'_{p,q}$). In an attempt to answer the analogue of Question \ref{Q1} in the three--dimensional case, we conducted numerical experiments for $n=1,...,640$. However, for all these $n$, there exists a disjoint union of balls whose corresponding eigenvalue is bigger than that of the cube.

Finally, we remark that among disconnected domains, the second nonzero Neumann eigenvalue is maximized by a disjoint union of two identical balls for all $N\geq3$ by Theorem \ref{thm2}. Also, $\mu_1=\mu_2=...=\mu_N$ for an $N$-dimensional ball, and  therefore a single ball always yields a lower second nonzero eigenvalue than the
disjoint union mentioned above. Taking this into account together with the results of \cite{4}, we may pose the following

\begin{question}
Is the disjoint union of two identical balls maximizing $\mu_2$ in all dimensions?
\end{question}

Going back to the planar case, we conclude the discussion by the following result, whose analogue for the Dirichlet eigenvalues was proved in \cite[p. 399]{5}:
\begin{proposition}\label{int_values_lemma} Consider the first and the second nonzero eigenvalues $\mu_1, \mu_2$ of the Neumann problem and their respective maxima $\mu_1^*=\pi\, j_{1,1}^{'2},\mu_2^*=2\pi\, j_{1,1}^{'2} $ in the class of disjoint unions of simply connected domains of total unit area. Then, for $i=1,2$, there exists a domain $\Omega_t$ in that class such that
$$\mu_i(\Omega_t) = t,$$
for all values of t in the interval $[0, \mu_i^*]$.
\end{proposition}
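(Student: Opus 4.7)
The plan is to construct, for each $t$ in the interval $[0, \mu_i^*]$, an explicit domain $\Omega_t$ in the admissible class and then appeal to the intermediate value theorem applied to a continuous one-parameter family of domains. The admissible class contains single simply connected domains (viewed as a trivial disjoint union of one component) together with genuine disjoint unions, the latter being needed only to realize the endpoint $t = 0$.

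For $i = 1$, I would let $E_a$ denote the ellipse with semi-axes $a$ and $1/(\pi a)$, so $|E_a| = 1$. At $a = 1/\sqrt{\pi}$, $E_a$ is the unit-area disk and by Szeg\H{o}--Weinberger $\mu_1(E_a) = \pi j_{1,1}'^2 = \mu_1^*$. As $a \to \infty$, plugging the test function $\phi(x,y) = x$ (which is orthogonal to constants by the central symmetry of $E_a$) into the Rayleigh quotient yields
\begin{equation*}
\mu_1(E_a) \ \leq\ \frac{\int_{E_a} |\nabla \phi|^2}{\int_{E_a} \phi^2} \ =\ \frac{|E_a|}{\int_{E_a} x^2\, dx\, dy} \ =\ \frac{4}{a^2} \ \longrightarrow\ 0.
\end{equation*}
Since $a \mapsto \mu_1(E_a)$ depends continuously on $a$ (a standard consequence of the min--max characterization for smoothly varying smooth domains), the intermediate value theorem produces an $a(t)$ with $\mu_1(E_{a(t)}) = t$ for every $t \in (0, \mu_1^*]$. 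For $t = 0$, I would take $\Omega_0$ to be any disjoint union of two disks of total area $1$; the remark on disconnected spectra then forces $\mu_1(\Omega_0) = 0$.

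For $i = 2$, I would set $\Omega_t = D \sqcup E_a$, where $D$ is the disk of area $1/2$ (so $\mu_1(D) = 2\pi j_{1,1}'^2 = \mu_2^*$) and $E_a$ is the ellipse of area $1/2$ with semi-axes $a$ and $1/(2\pi a)$. Since the spectrum of a disjoint union is the ordered union of the component spectra and $\mu_1(\Omega_t) = 0$ by disconnectedness,
\begin{equation*}
\mu_2(\Omega_t) \ =\ \min\bigl(\mu_1(D),\, \mu_1(E_a)\bigr) \ =\ \min\bigl(2\pi j_{1,1}'^2,\, \mu_1(E_a)\bigr).
\end{equation*}
The same continuity-and-IVT argument as above, combined with $\mu_1(E_a) = 2\pi j_{1,1}'^2$ when $a = 1/\sqrt{2\pi}$ (the disk of area $1/2$) and $\mu_1(E_a) \leq 4/a^2 \to 0$ as $a \to \infty$, gives an $a(t)$ with $\mu_1(E_{a(t)}) = t$ for every $t \in (0, \mu_2^*]$; since $t \leq 2\pi j_{1,1}'^2$, this yields $\mu_2(\Omega_t) = t$. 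For $t = 0$, take $\Omega_0$ to be a disjoint union of three simply connected components, so that $\mu_0 = \mu_1 = \mu_2 = 0$.

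The only nontrivial input is the continuous dependence of $\mu_1$ on the eccentricity parameter of the ellipse family, which is a classical consequence of smooth domain perturbation theory and the min--max characterization; the quantitative degeneration $\mu_1(E_a) \to 0$ is made explicit by the test function $\phi = x$. Granted these, the proposition reduces to a direct application of the intermediate value theorem.
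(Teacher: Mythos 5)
Your proposal is correct, but it diverges from the paper's proof in two places worth noting. For $i=1$ both arguments run on the same one-parameter family of unit-area ellipses and conclude by continuity plus the intermediate value theorem; the difference is how the degeneration $\mu_1\to 0$ is certified. The paper quotes Kr\"oger's diameter bound for convex domains, $\mu_m(\Omega)d_\Omega^2\le(2j_{0,1}+(m-1)\pi)^2$, whereas you plug the test function $\phi=x$ (orthogonal to constants by symmetry) into the Rayleigh quotient to get the explicit bound $\mu_1(E_a)\le 4/a^2$; your route is more elementary and self-contained, the paper's imports a sharper general-purpose estimate. For $i=2$ the constructions are genuinely different: the paper builds $\Omega_t$ from rectangles and disks in three separate ranges of $t$, exploiting the fact that rectangle and disk eigenvalues are known in closed form, so that no continuity argument is needed for most of the range; you instead take a fixed half-area disk (whose first nonzero eigenvalue is exactly $\mu_2^*=2\pi j_{1,1}'^2$) together with a degenerating half-area ellipse, observe $\mu_2(\Omega_t)=\min\bigl(2\pi j_{1,1}'^2,\mu_1(E_a)\bigr)$, and reduce the whole interval to the $i=1$ continuity-and-IVT argument in one stroke. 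Your version is more uniform and shorter; the paper's is more explicit and avoids invoking eigenvalue continuity a second time. Both treatments of $t=0$ (two, respectively three, components) coincide. The only input you should state carefully is the continuity of $\mu_1(E_a)$ in $a$ for Neumann eigenvalues, which is delicate for general domain perturbations but standard for a smooth family of uniformly Lipschitz convex domains; the paper covers this by citing Chenais, and you should cite something equivalent rather than leave it as folklore.
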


\section{PROOFS}

\subsection{Maximization of Neumann eigenvalues for disconnected domains}\label{S1}

In this section, we present the proofs of Theorem \ref{thm2} and Corollary \ref{cor2}.

\begin{proof}[Proof of Theorem \ref{thm2}] Our  proof is similar to
the proof of \cite[Theorem 8.1]{5}. Let $\Omega_n^*$ be the disjoint
union of $\Omega_1$ and $\Omega_2$ with $|\Omega_1|>0$,
$|\Omega_2|>0$ and $|\Omega_1|+|\Omega_2|=1$. Let $u_n$ be an
eigenfunction corresponding to the eigenvalue $\mu_n^*$ on
$\Omega_n^*$. Then, $u_n$ is not identically zero on one of the
components of $\Omega_n^*$, say, on $\Omega_1$ and we have $\mu_n^*$=$\mu_i(\Omega_1)$
for some $0\leq i\leq n$.

At the same time, since $\sigma(\Omega_n^*)$ is the ordered union of $\sigma(\Omega_1)$ and $\sigma(\Omega_2)$, we have $\mu_n^*\geq\mu_{n-i}(\Omega_2)$. Assume $\mu_n^*>\mu_{n-i}(\Omega_2)$. Then, since $\mu_n^*= \min\left\lbrace \mu_i , \mu_{n-i} \right\rbrace $, we can increase the value of $\mu_n^*$ by increasing the volume of $\Omega_1$ and by decreasing the volume of $\Omega_2$ while keeping the total volume equal to $1$. This would contradict the definition of a maximizer. Therefore, $\mu_n^*=\mu_i(\Omega_1)=\mu_{n-i}(\Omega_2)$. Note that $i$ can not be either $0$ or $n$ since it would imply that $\mu_n^*=0$.

We now optimize our choice of domains. Replacing $\Omega_1$ by
$|\Omega_1|^{\frac{1}{N}}\Omega_i^*$, we have a domain that has the same volume
as $\Omega_1$ (note that $|\alpha
\Omega|=\alpha^N|\Omega|$). Moreover, since $\Omega_i^*$ maximizes $\mu_i$, we
get a bigger associated eigenvalue. We do the same for $\Omega_2$.
Thus, we get
\begin{equation}\label{preuve1}
 \mu_n^*=\mu_i(\Omega_1)=\mu_i(|\Omega_1|^{\frac{1}{N}}\Omega_i^*)=\frac{1}{|\Omega_1|^{2/N}}\mu_i(\Omega_i^*)=\frac{1}{|\Omega_1|^{2/N}}\mu_i^*,
\end{equation}
and
\begin{equation}\label{preuve2}
 \mu_n^*=\mu_{n-i}(\Omega_2)=\mu_{n-i}(|\Omega_2|^{\frac{1}{N}}\Omega_{n-i}^*)=\frac{1}{|\Omega_2|^{2/N}}\mu_{n-i}(\Omega_{n-i}^*)=\frac{1}{|\Omega_2|^{2/N}}\mu_{n-i}^*.
\end{equation}

Using \eqref{preuve1}, we find that $|\Omega_1|^{2/N} = \dfrac{\mu_i^*}{\mu_n^*}$, and similarly, we get
$|\Omega_2|^{2/N} = \dfrac{\mu_{n-i}^*}{\mu_n^*}$ from \eqref{preuve2}. Since $|\Omega_1|$+$|\Omega_2|=1$, we have $(\mu_n^*)^{N/2} = (\mu_i^*)^{N/2} + (\mu_{n-i}^*)^{N/2}$. This implies the first equality of \eqref{part A} and \eqref{part B}.

Let us now consider  $\widetilde{\Omega}_j$ for $j=1,2,...,n-1$ defined by
\begin{equation}
\widetilde{\Omega}_j=\left[\left(\frac{(\mu_{j}^*)^{N/2}}{(\mu_j^*)^{N/2}+(\mu_{n-j}^*)^{N/2}}\right)^{\frac{1}{N}}\:\Omega_j^*\right]\:\bigcup\:\left[\left(\frac{(\mu_{n-j}^*)^{N/2}}{(\mu_j^*)^{N/2}+(\mu_{n-j}^*)^{N/2}}\right)^{\frac{1}{N}}\:\Omega_{n-j}^*\right]. \nonumber
\end{equation}
Each $\widetilde{\Omega}_j$ has a unit volume. Moreover, we must remark that
\begin{align*}
 \mu_j\left(\left(\dfrac{(\mu_{j}^*)^{N/2}}{(\mu_j^*)^{N/2}+(\mu_{n-j}^*)^{N/2}}\right)^{\frac{1}{N}}\Omega_j^*\right) & = \mu_{n-j}\left(\left(\dfrac{(\mu_{n-j}^*)^{N/2}}{(\mu_j^*)^{N/2}+(\mu_{n-j}^*)^{N/2}}\right)^{\frac{1}{N}}\Omega_{n-j}^*\right) \\
&= \left[ \left( \dfrac{(\mu_j^*)^{N/2} + (\mu_{n-j}^*)^{N/2}}{(\mu_{n-j}^*)^{N/2}}\right)^{\frac{1}{N}}\right]^2 \mu_{n-j}(\Omega_{n-j}^*) \\
&= ((\mu_j^*)^{N/2}+(\mu_{n-j}^*)^{N/2})^{\frac{2}{N}}.
\end{align*}
Since $\mu_n^*\geq\mu_n(\widetilde{\Omega}_j)$ for all $j$, $1 \leq j \leq n-1$ and $\mu_n^*=\mu_n(\widetilde{\Omega}_i)$ for some $i$, $1\leq i\leq n-1$, we have that $\mu_n^*$ is the maximum value of $\mu_n(\widetilde{\Omega}_j)$. Hence $\Omega_n^* = \widetilde{\Omega_j}$ for any index $j$ realizing the maximum, which proves the last equality in formula \eqref{part A}. Clearly, if $j \leq \dfrac{n}{2}$, then $n-j \geq \dfrac{n}{2}$, and therefore we only need to consider values of $j$ less than or equal to $\dfrac{n}{2}$.

\end{proof}

\begin{proof}[Proof of Corollary \ref{cor2}]
 By hypothesis, we have that $(\mu_n(\Omega))^{N/2} > (\mu_i^*)^{N/2} + (\mu_{n-i}^*)^{N/2}$ and by the definition of a maximizer, we know that $\mu_n^*\geq\mu_n(\Omega)$. Then, Theorem \ref{thm2} can not hold for a disconnected domain.
\end{proof}

\subsection{Disks don't always maximize $\mu_n$}\label{S2}

In this section, we give details of computations that led to Theorem \ref{thm3}.

We calculate the first twenty-two nonzero eigenvalues of the Neumann problem for disjoint unions of squares (right hand side of the Table \ref{table2}) and for disjoint unions of disks (left hand side of the Table \ref{table2}). Let $\mu_n(\textbf{D})$ be the eigenvalues obtained from a single disk, $\mu_n^*(\textbf{UD})$ be the eigenvalues obtained from disjoint unions of disks using Theorem \ref{thm2} and $\mu_n^*$ be the maximizer among all disjoint unions of disks.

\begin{table}[b]\label{table2}
\caption{Maximal eigenvalues for disjoint unions of disks and disjoint unions of
squares computed using Theorem \ref{thm2}.} \centering \small
\begin{tabular}
{@{\extracolsep{\fill}} c c c c c c || c c c c}
\hline
1 & 2 & 3 & 4 & 5 & 6 & 7 & 8 & 9 & 10\\
\hline
n & $\mu_n(\textbf{D})$ & $\mu_n(\textbf{D})$ & $\mu_n^*(\textbf{UD})$ & $\mu_n^*$ & $\mu_n^*$ & $(j^2 + k^2)$ & $\mu_n^*(\textbf{US})/\pi²$ & $\mu_n^*$ & $\mu_n^*$ \\
\hline
1  & $\pi j_{1,1}^{'2}$ & 10.650 &  -  & $\mu_1$ & 10.65 & 1+0 & - & $\mu_1$ & 9.87 \\
2  & $\pi j_{1,1}^{'2}$ & 10.650 &  21.300  & $2 \mu_1$ & 21.30 & 0+1 & 2 & $2\mu_1$ & 19.74 \\
3  & $\pi j_{2,1}^{'2}$ & 29.306 & 31.950 & $3 \mu_1$ & 31.95 & 1+1 & 3 & $3\mu_1$ & 29.61  \\
4  & $\pi j_{2,1}^{'2}$ & 29.306 & 42.599 & $4 \mu_1$ & 42.60 & 4+0 & 4 & $4\mu_1=\mu_4$ & 39.48   \\
5  & $\pi j_{0,2}^{'2}$ & 46.125 & 53.249 & $5 \mu_1$ & 53.25 & 0+4 & 5 & $5\mu_1$ & 49.35   \\
6  & $\pi j_{3,1}^{'2}$ & 55.449 & 63.899 & $6 \mu_1$ & 63.90 & 4+1 & 6 & $6\mu_1$ & 59.22   \\
7  & $\pi j_{3,1}^{'2}$ & 55.449 & 74.549 & $7 \mu_1$ & 74.55 & 1+4 & 7 & $7\mu_1$ & 69.09     \\
8  & $\pi j_{4,1}^{'2}$ & 88.833 & 85.199 & $\mu_8$ & 88.83 & 4+4 & 8 & $8\mu_1=\mu_8$ & 78.96    \\
9  & $\pi j_{4,1}^{'2}$ & 88.833 & 99.483 & $\mu_8+\mu_1$ & 99.48 & 9+0 & 9 & $9\mu_1=\mu_9$ & 88.83        \\
10 & $\pi j_{1,2}^{'2}$ & 89.298 & 110.133 & $\mu_8+2 \mu_1$ & 110.13 & 0+9 & 10 & $10\mu_1$ & 98.70    \\
11 & $\pi j_{1,2}^{'2}$ & 89.298 & 120.783 & $\mu_8+3 \mu_1$ & 120.78 & 9+1 & 11 & $11\mu_1$ & 108.57      \\
12 & $\pi j_{5,1}^{'2}$ & 129.308 & 131.432 & $\mu_8+4 \mu_1$ & 131.43 & 1+9 & 12 & $12\mu_1$ & 118.44    \\
13 & $\pi j_{5,1}^{'2}$ & 129.308 & 142.081 & $\mu_8+5 \mu_1$ & 142.08 & 9+4 & 13 & $13\mu_1=\mu_{13}$ & 128.30   \\
14 & $\pi j_{2,2}^{'2}$ & 141.284 & 152.732 & $\mu_8+6 \mu_1$ & 152.73 & 4+9 & 14 & $14\mu_1$ & 138.17   \\
15 & $\pi j_{2,2}^{'2}$ & 141.284 & 163.382 & $\mu_8+7 \mu_1$ & 163.38 & 16+0 & 15 & $\mu_{15}$ & 157.91  \\
16 & $\pi j_{0,3}^{'2}$ & 154.624 & 177.666& $2 \mu_8$ & 177.67 & 0+16 & 16+1=17 & $\mu_{15}+\mu_1$ & 167.78  \\
17 & $\pi j_{6,1}^{'2}$ & 176.774 & 188.316 & $2 \mu_8+\mu_1$ & 188.32 & 16+1 & 18 & $\mu_{15}+ 2\mu_1$ & 177.65   \\
18 & $\pi j_{6,1}^{'2}$ & 176.774 & 198.965 & $2 \mu_8+2 \mu_1$ & 198.97 & 1+16 & 19 & $\mu_{15}+3\mu_1$ & 187.52  \\
19 & $\pi j_{3,2}^{'2}$ & 201.829 & 209.615 & $2 \mu_8+3 \mu_1$ & 209.62 & 9+9 & 20 & $\mu_{15}+4\mu_1$ & 197.39    \\
20 & $\pi j_{3,2}^{'2}$ & 201.829 & 220.265 & $2 \mu_8+4 \mu_1$ & 220.27 & 16+4 & 21 & $\mu_{15}+5\mu_1$ & 207.26  \\
21 & $\pi j_{1,3}^{'2}$ & 228.924 & 230.915 & $2 \mu_8+5 \mu_1$ & 230.92 & 4+16 & 22 & $\mu_{15}+6\mu_1$ & 217.13 \\
22 & $\pi j_{1,3}^{'2}$ & 228.924 & 241.565 & $2 \mu_8+6 \mu_1$ & 241.56 & 16+9 & 23 & $\mu_{22}$ & 246.74 \\
23 & $\pi j_{7,1}^{'2}$ & 231.156 & 252.215 & $2 \mu_8+7 \mu_1$ & 252.21 & 9+16 & 25+1=26 & $\mu_{22}+\mu_1$ & 256.61 \\
24 & $\pi j_{7,1}^{'2}$ & 231.156 & 266.499 & $3 \mu_8$ & 266.50 & 25+0 & 27 & $\mu_{22}+2\mu_1$ & 266.48\\
25 & $\pi j_{4,2}^{'2}$ & 270.689 & 277.148 & $3 \mu_8+ \mu_1$ & 277.15 & 0+25 & 28 & $\mu_{22}+3\mu_1$ & 276.35 \\

\hline
\end{tabular} \\
\end{table}

\noindent Here is the legend for Table \ref{table2}:
\begin{itemize}
 \item First column represents the index $n$ of the eigenvalue $\mu_n$;
 \item Second column is the eigenvalue $\mu_n$ for a single disk computed from formula \eqref{eqdisks};
 \item Third column gives the numerical value of $\mu_n$;
 \item In the fourth column, we use Theorem \ref{thm2} to provide the numerical value of $\mu_n^*$ under the assumption that the maximizing domain is disconnected.
 \item Fifth column represents the maximum of third and fourth columns in terms of $\mu_j$ of a
 disk. This yields the geometry of the maximizing domain;
 \item Sixth column gives the numerical value of $\mu_n^*$ for disjoint unions of disks;
 \item Seventh column gives the values of $(j²+k²)$  computed using formula \eqref{eqsquares} for eigenvalues of a square;
 \item Eighth column gives $\dfrac{\mu_n^*}{\pi^2}$ computed using Theorem \ref{thm2};
 \item Ninth column gives the maximum of seventh and eighth columns in terms of $\mu_j$ of a
 square. This yields the geometry of the maximizing domain;
 \item Last column gives the numerical value for the maximum of $\mu_n$ for disjoint unions of squares, i.e. \\
 $\max\{\mbox{column 7},\mbox{column 8}\} \pi²$.
\end{itemize}

Theorem \ref{thm2} is used iteratively in order to obtain the
maximizer among disks and squares. Column five allows to
recover the  maximizing domain for $\mu_n$ of disks. For
instance, in the case of $\mu_j$ for $2 \leq j \leq 7$, the
maximizer is given by a disjoint union of $j$ disks of the same
area, since any other combination would yield a lower value of
$\mu_j$. For $\mu_8$, the maximizer is a single disk.
Therefore, the maximizer for $\mu_j$ for $9 \leq j \leq 15$ is one
big disk and $j-8$ smaller ones. For $\mu_{16}$, we consider two
disks of same area, and for $\mu_j$ for $17 \leq j \leq 22$, we have
two big disks and $j-16$ smaller disks. As for the squares, for $\mu_{15}$,
 the maximizer is a single square. As a result,  for $16 \leq j
\leq 21$, we have a big square and $j-15$ smaller squares. For
$\mu_{22}$, the eigenvalue of a single square is bigger than the
possible value obtained using any combination of squares.

\begin{proof}[Proof of Theorem \ref{thm3}]
We see that for $n=22$, the value of the maximizer of any disjoint union of
disks is smaller than the corresponding value of a single square,
i.e. $ 241.56 < 246.74 $. From column 5, we can recover the
geometry of the maximizer among the disjoint unions of disks (see figure
\ref{fig:1} on page \pageref{fig:1}).
\end{proof}

\begin{remark}
 We see that $\mu_{22}$ and $\mu_{23}$ of disjoint unions of squares are bigger than any disjoint union of disks, but we have to wait until $\mu_{83}$ to see that happen once again.
\end{remark}

\graphicspath{{/home/gpoliquin/}}

\begin{figure}[htp]
\begin{center}
\includegraphics[scale=0.58]{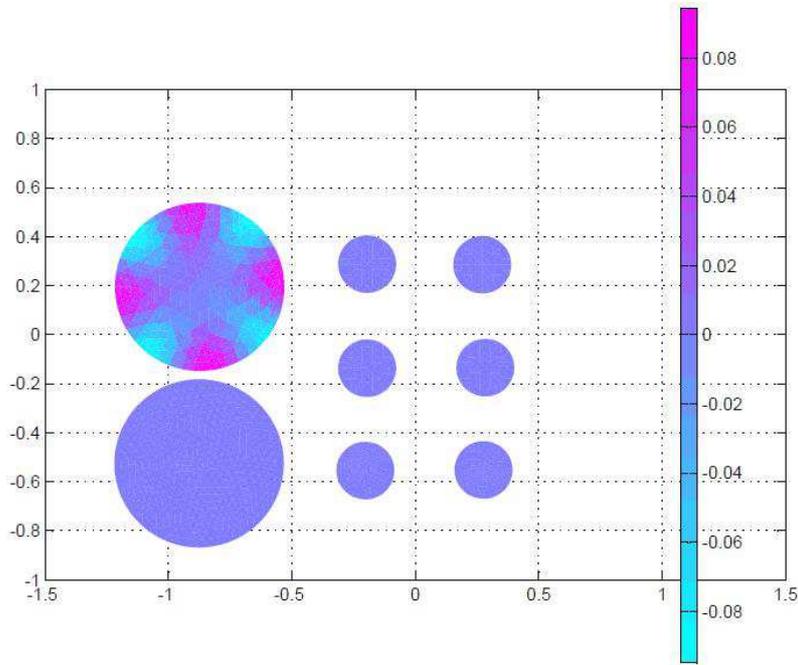}
\caption{A disconnected domain $\Omega$ maximizing $\mu_{22}$ among disjoint unions of disks; $\mu_{22}(\Omega)\approx241.56$}
\label{fig:1}
\end{center}
\end{figure}

Numerical experiments performed in \cite{13} show
that $\Omega_3^*$ is a connected domain different from  a
disk. Taking into account this observation and Theorem \ref{thm3},
as well as the results of \cite{7,6,4} one may reformulate Question
\ref{Q1} in the following way:

\begin{question}
For which $n>3$ is the set $\Omega_n^*$ a disjoint union of disks?
\end{question}

Numerical and analytic results in the Dirichlet case show that the optimal
domain $\Omega_n^*$ is sometimes connected, $n=1,3,5,6,8,9,10$ and
sometimes not, $n=2,4,7$ (see \cite[Figure 5.1]{1} or \cite{12}).
Let us conclude this section by yet another question regarding maximizers for Neumann
eigenvalues:

\begin{question}
 For which $n>3$ are the optimal domains $\Omega_n^*$ connected?
\end{question}

\subsection{Proof of Proposition \ref{int_values_lemma}}

\begin{proof}
For $t=0$ and $i=1$ (resp. $i=2$), it suffices to build any domain with $2$ (resp. $3$) or more connected components. We now consider the case $t>0$.

We begin with the case $i=1$. We let $\Omega_a$ be the ellipse with axes $a/\sqrt{\pi}$ and $1/\sqrt{\pi}a$, for values of $a$ in $(0,1]$. Every such ellipse is bounded and convex in $\mathbb{R}^2$ and, by a result from Kröger (see Theorem 1 in \cite{UppNeuConvDomainEucl}), we have $\mu_m(\Omega_a)d^2_\Omega \leq (2j_{0,1} + (m-1)\pi)^2$, where $d_{\Omega_a}$ is the diameter of the convex domain, namely $2/\sqrt\pi a$ in this case. When applied to the first eigenvalue, the result yields the inequality $\mu_1(\Omega_a) \leq \pi j'_{0,1} a^2$. This means that, for any $t$ in $(0, \pi j_{1,1}^{'2}]$, there exists $\alpha$ in $(0,1]$ such that $\mu_1(\Omega_\alpha) \leq t$. We conclude this part of the proof by invoking a result from Chenais (see \cite[ p. 35 ]{1}), which shows that once $t$ and $\alpha$ are picked as above, the first eigenvalue $\mu_1$ of the family of ellipses $\Omega_a$ varies continuously from $\pi j_{1,1}^{'2}$ to $t$, as $a$ decreases from 1 to $\alpha$.

We now consider the case $i=2$. We know that $\mu_2^*  = 2\pi j_{1,1}^{'2}$. Let $t\in (0, 2\pi j_{1,1}^{'2}]$. Depending on the value of $t$, we construct $\Omega_t$ as a disjoint union of rectangles and circles.

We use formulas \eqref{eqsquares} and \eqref{eqdisks} to compute explicitly the eigenvalues.

We first define $a := \frac{t - \epsilon}{\pi\sqrt{t}}$ and $b := \frac{\pi}{\sqrt{t}}$ and consider the disjoint union of a rectangle with sides $a,b$ and a disk of radius $\sqrt\frac{\epsilon}{t\pi}$. The union has total volume one and it's first nonzero eigenvalue comes from the rectangle for a small enough $\epsilon > 0$. Since $b \geq 1$ for $t \in (0, \pi^2]$, the first nonzero eigenvalue of the rectangle is $\pi^2/b^2 = t$, thus allowing us to cover the desired range.

The idea is similar when $t$ is in the interval $(\pi^2, \pi j_{1,1}^{'2}]$. For $b := \pi/\sqrt t$, we consider the rectangle of sides $b$ and $b - \epsilon$. Letting the other component being the disk whose radius brings the disjoint union to a total area of 1, we choose $\epsilon > 0$ small enough so that the first nonzero eigenvalue comes from the rectangle. Doing so, we get a domain for which $\mu_2 = t$, for all t in $(\pi^2, \pi j_{1,1}^{'2}]$, as desired.

We finally consider the disjoint union of two identical disks of area $1/2$ and scale the first one by a ratio of $j'_{1,1}(2\pi/t)^{1/2}$. Scaling the second one so that the union has unit area allows us to cover the range $t$ in $(\pi j_{1,1}^{'2}, 2\pi j_{1,1}^{'2})$ and concludes the proof.

\end{proof}

\subsection*{Acknowledgments}

The authors are grateful to Iosif Polterovich for suggesting the
problem, as well as for his continuous support throughout this
project. We would like to thank Dan Mangoubi for invaluable help
during the first stages of this research. We are also thankful to
Alexandre Girouard for useful discussions.

\vspace{3 mm}

{\scshape Guillaume Poliquin, Département de mathématiques et de statistique,
Université de Montréal, CP 6128 succ. Centre-Ville, Montréal,
H3C 3J7, Canada}

\emph{E-mail address:}   \verb"gpoliquin@dms.umontreal.ca"

\vspace{3 mm}

{\scshape Guillaume Roy-Fortin, Département de mathématiques et de statistique,
Université de Montréal, CP 6128 succ. Centre-Ville, Montréal,
H3C 3J7, Canada}

\emph{E-mail address:} \verb"groyfortin@dms.umontreal.ca"


\begin{thebibliography}{DDDD}

\bibitem[1]{ABRA}
{\scshape Abramowitz, M. and Stegun, I.A.},
\emph{Handbook of Mathematical Functions},
National Bureau of Stnadards Applied Mathematics Series, 55 (1964), U.S. Governement Printing Office, Washington, DC.

\bibitem[2]{AB}
{\scshape Ashbaugh, M.S. and Benguria, R.D.},
\emph{Universal bounds for the low eigenvalues of the Neumann Laplacians in N dimensions},
SIAM J. Math Anal Vol. 24, No. 3, (1993), 557-570.

\bibitem[3]{8}
{\scshape Chavel, I.},
\emph{Eigenvalues in riemannian geometry},
Academic Press, (1984).

\bibitem[4]{2}
{\scshape Courant, H. and Hilbert, D.},
\emph{Methods of mathematical physics, Vol. 1},
second edition, (1953).

\bibitem[5]{13} {\scshape Freitas, P. and Antunes P.}, {\it Private
communication}.

\bibitem[6]{4}
{\scshape Girouard A, Nadirashvili N. and Polterovich, I.},
\emph{Maximization of the second positive Neumann eigenvalue for planar domains}, J. Diff. Geometry 83, no. 3 (2009), 637-662.

\bibitem[7]{3}
{\scshape Girouard, A. and Polterovich, I.},
\emph{Shape optimization for low Neumann and Steklov eigenvalues},
Special issue on ``Complex-Analytic Methods", Math. Meth. Appl. Sci. 33, no. 4 (2010),  501-516.

\bibitem[8]{1}
  {\scshape Henrot, A.},
\emph{Extremum Problems for eigenvalues of elliptic operators},
  Birkhäuser Verlag, Basel, (2006).

\bibitem[9]{10}
{\scshape Krahn, E.},
\emph{Über eine von Rayleigh formulierte Minimaleigenschaft des Kreises}, Math. Annalen \textbf{5}, (1924), 97-100.

\bibitem[10]{11}
{\scshape Kroger, P.},
\emph{Upper bounds for the Neumann eigenvalues on a bounded domain in Euclidean space}, J. Funct. Anal. 106,  (1992), 353-357.

\bibitem[11]{UppNeuConvDomainEucl}
{\scshape Kroger, P.},
\emph{On upper bounds for high order Neumann eigenvalues of convex domains in Euclidean space},
Proc. Amer. Math. Soc. 127, 1665--1669 (1999)

\bibitem[12]{12}
{\scshape Oudet, E.},
\emph{Some numerical results about minimization problems involving eigenvalues}, ESAIM COCV, \textbf{10} (2004), 315-335.

\bibitem[13]{9}
{\scshape Safarov, Y. and Vassiliev, D.},
\emph{The asymptotic distribution of eigenvalues of partial differential operators}, American Mathematical Society, Providence, RI, (1997).

\bibitem[14]{7}
{\scshape Szeg\H{o}, G},
\emph{Inequalities for certain eigenvalues of a membrane of given area},
J. Rational Mech. Anal. 3, (1954), 343?356, MR0061749, Zbl 0055.08802.

\bibitem[15]{6}
{\scshape Weinberger, H.F.},
\emph{An isoperimetric inequality for the N-dimensionnal free membrane problem},
J. Rational Mech. Anal. \textbf{5}, (1956), 633-636.

\bibitem[16]{5}
{\scshape Wolf, S.A. and Keller, J.B.},
\emph{Range of the first two eigenvalues of the Laplacian},
Proc. R. Soc. Lond. A., (1994), 447, 397-412.

\bibitem[17]{TABLE}
\emph{Royal Society Mathematical Tables, vol. 7, Bessel functions, Part III. Zeros and associated values},
Cambridge Univ. Press, Cambridge, England, (1960).



\end{thebibliography}
\end{document}